\begin{document}

\begin{center}
{\LARGE\bf Heun equations coming from geometry \footnote{ 
Math. classification:  33E30, 33C05 \\
Keywords: Gauss and Heun equations, rational Belyi functions, Riemann scheme
}}
\\
\vspace{.25in} {\large {\sc Hossein Movasati, Stefan Reiter}} \\
Instituto de Matem\'atica Pura e Aplicada, IMPA, \\
Estrada Dona Castorina, 110,\\
22460-320, Rio de Janeiro, RJ, Brazil, \\
E-mail:
{\tt hossein@impa.br, reiter@impa.br} 
\end{center}
\begin{abstract}
We give a list of Heun equations
which are Picard-Fuchs associated to families of
algebraic varieties. Our list is based on the classification of families of elliptic curves with four singular fibers done by Herfurtner. 
We also show that pull-backs of hypergeometric functions by rational Belyi functions with restricted ramification data give rise to  Heun equations.  
\end{abstract}
\newtheorem{theo}{Theorem}
\newtheorem{exam}{Example}
\newtheorem{coro}{Corollary}
\newtheorem{defi}{Definition}
\newtheorem{prob}{Problem}
\newtheorem{lemma}{Lemma}
\newtheorem{prop}{Proposition}
\newtheorem{rem}{Remark}
\newtheorem{cor}{Corollary}
\newtheorem{conj}{Conjecture}
\def\Gal{{\rm Gal}}              
\def\Z{\mathbb{Z}}                   
\def\Q{\mathbb{Q}}                   
\def\C{\mathbb{C}}                   
\def\ring{{\sf R}}                             
\def\R{\mathbb{R}}                   
\def\N{\mathbb{N}}                   
\def\uhp{{\mathbb H}}                
\newcommand{\mat}[4]{
     \begin{pmatrix}
            #1 & #2 \\
            #3 & #4
       \end{pmatrix}
    }                                

\newcommand\SL[2]{{\rm SL}(#1, #2)}    
\def\per{{\sf pm}}

\def\la{\lambda}
\def\th{\theta}
\def\P{\mathbb P}
\theoremstyle{plain}
\def\NN{{\mathbb N}}
\def\CC{{\mathbb C}}
\def\ZZ{{\mathbb Z}}
\def\QQ{{\mathbb Q}}
\def\SL{{\rm SL}}
\def\PSL{{\rm PSL}}
\def\GO{{\rm GO}}
\def\GL{{\rm GL}}
\def\PGL{{\rm PGL}}
\def\dz{{\rm d} z}
\def\dx{{\rm d} x}

\def\la{{\lambda}}
\def\nlambda{{\tilde{\lambda}}}
\def\nue{{\nu}}
\def\ele{{\rm L}}
\def\tparam{{t}}
\def\Mu{{\mu}}
\def\th{{\theta}}
\def\thh{{\tilde{\theta}}}
\def\Th{{\Theta}}
\def\tr{{\rm tr}}
\def\rk{{\rm rk}}
\def\Trace{{\rm Tr}}
\def\Mat{{\rm Mat}}
\def\al{{\alpha}}
\def\diag{{\rm diag}}
\def\id{{\rm id}}
\def\k{{\frak k}}
\def\l{{\frak  l}}
\def\Res{{\rm  Res}}
\def\dR{\rm dR}

\section{Introduction}
For a linear differential equation which depends on some parameters, it is a natural question to ask for which values of the parameters the specialized differential equation comes from geometry.  We say that the linear differential equation comes from geometry if there is a proper family of algebraic varieties $X\to \P^1$ over $\C$ and a differential form $\omega\in H^i_{\dR}(X/\P^1)$ such that the periods $\int_{\delta_z}\omega$, where $\delta_z\in H_i(X_z,\Z)$ is a continuous  family of cycles, span the solutions space of the linear differential equation.  Such linear differential equations are also called  Picard-Fuchs equations  (for further details see \cite{and88}, Chapter II, \S 1).

If a linear differential equation comes from geometry then it is well-known that the exponents of its singularities are all rational numbers (see for instance \cite{ka70} and the references therein). This implies that the Gauss hypergeometric equation with parameters $a,b,c$  comes from geometry if and only if the exponents of its singular set are rational numbers and hence if and only if $a,b,c$ are rationals. The next non-trivial family of linear differential equations is  the family of Heun equations:
\begin{equation}
 \label{1.09.08}
y''+ (\frac{1-\th_1}{z-t}+\frac{1-\th_2}{z}+\frac{1-\th_3}{z-1})y'+(\frac{\th_{41}\th_{42}z-q}{z(z-1)(z-t)})y=0
\end{equation}
with
$$
\th_{41}=-\frac{1}{2}(\th_1+\th_2+\th_3-2+\th_4),\ \th_{42}= -\frac{1}{2}( \th_1+\th_2+\th_3-2-\th_4).
$$
As we mentioned, if (\ref{1.09.08}) comes from geometry then the exponents $\th_i,\ i=1,2,\ldots,4,$  are rational numbers. Now, our problem reduces to the following one: For which rational numbers $\th_i,\ i=1,\ldots,4,$ and complex numbers $t,q\in\C$ does the corresponding equation (\ref{1.09.08}) come from geometry.

{
\begin{sidewaystable}[!htbp]
\tiny
\begin{center}
Table 1: Heun equations coming from geometry, $a,b,c\in\Q$
\begin{tabular}{|c|c|c|c|c|c|c|c|c|}
\hline
*  &   $q$ & $t$ & $\th_1$ & $\th_2$ & $\th_3$ & $\th_4$  & $\th_{42}$ & $\th_{41}$  \\ \hline
\hline
1 &
$ 
\frac{1}{3}(3a-2)(6a-1)t_1 $ &
$\frac{t_1^2}{3}, \ t_1^2+3t_1+3=0 $ &
$  a-\frac{1}{2}  $ &
$  a-\frac{1}{2}  $ &
$  a-\frac{1}{2}  $ &
$  9a-\frac{9}{2}  $ &
$  3a-\frac{1}{2}  $ &
$  -6a+4  $

\\ \hline
2 &
$  0  $ &
$ -1 $ &
$  b-\frac{1}{2}  $ &
$  2b-1  $ &
$  b-\frac{1}{2}  $ &
$  4a+4b-4  $ &
$ 2a $ &
$  -2a-4b+4  $ \\ \hline
3 &
$  -2(a+2b-2)(6b-5) $ &
$ -8 $ &
$  b-\frac{1}{2}  $ &
$  3b-\frac{3}{2}  $ &
$  a+b-1  $ &
$  3a+3b-3  $ &
$  a-b+1  $ &
$  -2a-4b+4  $ \\  \hline
4 &
$  -3(10a-7)(3a-2)t_1
$ &
$-t_1^2,\ t_1^2-11 t_1-1=0$ &
$  a-\frac{1}{2}  $ &
$  5a-\frac{5}{2}  $ &
$  a-\frac{1}{2}  $ &
$  5a-\frac{5}{2}  $ &
$  -a+\frac{3}{2}  $ &
$  -6a+4  $
\\ \hline
5 &
$  0  $ &
$ -1 $ &
$  a+c-1  $ &
$  2a+2b-2  $ &
$  a+c-1  $ &
$  2b+2c-2  $ &
$  -2a+2  $ &
$  -2a-2b-2c+4  $
\\ \hline
6 &
$  
\frac{-1}{3}(6a-5)(3a-2)t_1$ &
$\frac{t_1^2}{3}, t_1^2+3t_1+3=0$ &
$  3a-\frac{3}{2}  $ &
$  3a-\frac{3}{2}  $ &
$  3a-\frac{3}{2}  $ &
$  3a-\frac{3}{2}  $ &
$  -3a+\frac{5}{2}  $ &
$  -6a+4  $
\\ \hline
7 & 
$ 
   \frac{-2}{243}(96a-25)(3a-2)t_1 $ &
$\frac{t_1^2}{9},3t_1^2-14t_1+27=0$ &
$  a-\frac{1}{2}  $ &
$ \frac{1}{3} $ &
$  a-\frac{1}{2}  $ &
$  8a-4  $ &
$  3a-\frac{2}{3}  $ &
$  -5a+\frac{10}{3}  $
\\ \hline
8 &
$ 
\frac{-1}{288}(3a-2)(1029a-149) $ &
$ \frac{81}{32} $ &
$  a-\frac{1}{2}  $ &
$ \frac{1}{3} $ &
$  2a-1  $ &
$  7a-\frac{7}{2}  $ &
$  2a-\frac{1}{6}  $ &
$  -5a+\frac{10}{3}  $
\\ \hline
9 &
$\frac{-125}{6}(4a-3)(3a-2)  $ &
$ -80 $ &
$  a-\frac{1}{2}  $ &
$  4a-2  $ &
$ \frac{1}{3} $ &
$  5a-\frac{5}{2}  $ &
$ \frac{5}{6} $ &
$  -5a+\frac{10}{3}  $
\\ \hline
10 &
$   
\frac{-25}{18}(3a-2)(6a-5)$ &
$ -\frac{27}{5} $ &
$ \frac{1}{3} $ &
$  3a-\frac{3}{2}  $ &
$  2a-1  $ &
$  5a-\frac{5}{2}  $ &
$ \frac{5}{6} $ &
$  -5a+\frac{10}{3}  $
\\ \hline
11 &
$ 
\frac{1}{128}(49a-12)(3a-2)t_1 $ &
$\frac{t_1^2}{8}, 4t_1^2+13t_1+32=0$ &
$  a-\frac{1}{2}  $ &
$ \frac{1}{2} $ &
$  a-\frac{1}{2}  $ &
$  7a-\frac{7}{2}  $ &
$  \frac{5}{2}a-\frac{1}{2}  $ &
$  -\frac{9}{2}a+3  $
\\ \hline
12 & 
$  
\frac{-9}{16}a(a+2b-2)$ &
$ \frac{1}{4} $ &
$  2b-1  $ &
$ \frac{1}{2} $ &
$  b-\frac{1}{2}  $ &
$  3a+3b-3  $ &
$ \frac{3}{2}a $ &
$  -\frac{3}{2}a-3b+3  $
\\ \hline
13 &
$  
 \frac{39}{500}(3a-2)(6a-5)  $ &
$ -\frac{3}{125} $ &
$ \frac{1}{2} $ &
$  3a-\frac{3}{2}  $ &
$  a-\frac{1}{2}  $ &
$  5a-\frac{5}{2}  $ &
$  \frac{1}{2}a+\frac{1}{2}  $ &
$  -\frac{9}{2}a+3  $
\\ \hline
14 &
$  
\frac{-3}{4}(a+2b-2)(6b-5)$ &
$ -3 $ &
$ \frac{1}{2} $ &
$  3b-\frac{3}{2}  $ &
$  a+b-1  $ &
$  2a+2b-2  $ &
$  \frac{1}{2}a-b+1    $ &
$ -\frac{3}{2}a-3b+3  $
\\ \hline
15 &
$  0  $ &
$ -1 $ &
$  a-\frac{1}{2}  $ &
$ \frac{2}{3} $ &
$  a-\frac{1}{2}  $ &
$  6a-3  $ &
$  2a-\frac{1}{3}  $ &
$  -4a+\frac{8}{3}   $
\\ \hline
16 &
$  -\frac{14}{3}a+\frac{28}{9}  $ &
$ \frac{27}{2} $ &
$  a-\frac{1}{2}  $ &
$ \frac{2}{3} $ &
$  2a-1  $ &
$  5a-\frac{5}{2}  $ &
$  a+\frac{1}{6}  $ &
$  -4a+\frac{8}{3}   $
\\ \hline
17 & 
$ 
\frac{-2}{9}(3a-2)(6a-5) $ &
$ -1 $ &
$ \frac{2}{3} $ &
$  3a-\frac{3}{2}  $ &
$  2a-1  $ &
$  3a-\frac{3}{2}  $ &
$  -a+\frac{7}{6}   $ &
$  -4a+\frac{8}{3} $
\\ \hline
18 &
$ 
\frac{-1}{147}(58a-15)(3a-2)t_1 $ &
$\frac{t_1^2}{49},\ t_1^2-13 t_1+49=0$ &
$ \frac{1}{3} $ &
$  a-\frac{1}{2}  $ &
$ \frac{1}{3} $ &
$  7a-\frac{7}{2}  $ &
$  3a-\frac{5}{6}  $ &
$  -4a+\frac{8}{3}  $
\\ \hline
19 &
$ 0  $ &
$ -1 $ &
$\frac{ 1}{3} $ &
$  2a-1  $ &
$ \frac{1}{3} $ &
$  6a-3  $ &
$  2a-\frac{1}{3}  $ &
$  -4a+\frac{8}{3}  $
\\ \hline
20 & 
$
\frac{-4}{3}(4a-3)(3a-2)t_1 $ &
$-\frac{t_1^2}{2}, t_1^2-10t_1-2$ &
$  4a-2  $ &
$ \frac{1}{3} $ &
$  4a-2  $ &
$ \frac{1}{3} $ &
$  -4a+3  $ &
$  -4a+\frac{8}{3}  $
\\ \hline
21 & 
$ 
(\frac{-27}{2}\zeta-\frac{29}{4})(a-\frac{10}{9589}\zeta-\frac{7442}{28767})(a-\frac{2}{3})
$ & 
$ -\frac{2}{7}  (3 \zeta+1),\ \zeta^2+3=0 $ &
$a-\frac{1}{2} $ &
$ \frac{1}{2} $ &
$\frac{1}{3} $ &
$ 6a-3 $ &
$  \frac{5}{2}a-\frac{2}{3}  $ &
$ -\frac{7}{2}a+\frac{7}{3}$ 
\\ \hline
22 &
$  
\frac{-14}{1125}(3a-2)(147a-22)  $ &
$ \frac{189}{125} $ &
$\frac{ 1}{2} $ &
$ \frac{1}{3} $ &
$  2a-1  $ &
$  5a-\frac{5}{2}  $ &
$ \frac{3}{2}a-\frac{1}{6} $ &
$ -\frac{7}{2}a+\frac{7}{3} $
\\ \hline
23 &
$  
\frac{77}{972}(3a-2)(6a-5) $ &
$ -\frac{1}{27 } $ &
$ \frac{1}{2} $ &
$  3a-\frac{3}{2}  $ &
$ \frac{1}{3} $ &
$  4a-2  $ &
$ \frac{1}{2}a+\frac{1}{3} $ &
$ -\frac{7}{2}a+\frac{7}{3} $
\\ \hline
24 &
$  -\frac{1}{6}a+\frac{1}{9}  $ &
$ -\frac{16}{9} $ &
$  a-\frac{1}{2}  $ &
$ \frac{2}{3} $ &
$ \frac{1}{3} $ &
$  5a-\frac{5}{2}  $ &
$ 2a-\frac{1}{2} $ &
$ -3a+2 $
\\ \hline
25 &
$  -3a+2 $ &
$ 9 $ &
$ \frac{1}{3} $ &
$\frac{ 2}{3} $ &
$  2a-1  $ &
$  4a-2  $ &
$ a $ &
$ -3a+2 $
\\ \hline
26 &
$ 
\frac{-1}{125}(3a-2)(38a-9)t_1 $ &
$\frac{4t_1^2}{125},\ t_1^2-11t_1+125/4=0$ &
$ \frac{1}{2} $ &
$  a-\frac{1}{2}  $ &
$ \frac{1}{2} $ &
$  5a-\frac{5}{2}  $ &
$  2a-\frac{1}{2}  $ &
$  -3a+2  $
\\ \hline
27 & 
$  0 $ &
$ -1 $ &
$ \frac{1}{2} $ &
$  2b-1  $ &
$ \frac{1}{2} $ &
$  2a+2b-2  $ &
$ a $ &
$ -a-2b+2 $
\\ \hline
28 &
$ 
\frac{-1}{6}(6a-5)(3a-2)t_1$ &
$-\frac{t_1^2}{3},\ t_1^2-6t_1-3=0$ &
$  3a-\frac{3}{2}  $ &
$ \frac{1}{2} $ &
$  3a-\frac{3}{2}  $ &
$ \frac{1}{2} $ &
$  -3a+\frac{5}{2}  $ &
$  -3a+2  $
\\ \hline
29 &
$  \frac{5}{162}a-\frac{5}{243} $ &
$ -\frac{5}{27} $ &
$ \frac{1}{2} $ &
$ \frac{2}{3} $ &
$  a-\frac{1}{2}  $ &
$  4a-2  $ &
$  \frac{3}{2}a-\frac{1}{3}  $ &
$  -\frac{5}{2}a+\frac{5}{3}  $
\\ \hline
30 & 
$   -\frac{5}{3}a+\frac{10}{9}$ &
$ 5 $ &
$ \frac{1}{2} $ &
$ \frac{2}{3} $ &
$  2a-1  $ &
$  3a-\frac{3}{2}  $ &
$  \frac{1}{2}a+\frac{1}{6}  $ &
$  -\frac{5}{2}a+\frac{5}{3}  $
\\ \hline
31 &
$  0 $ &
$ -1 $ &
$\frac{ 2}{3} $ &
$  2a-1  $ &
$ \frac{2}{3} $ &
$  2a-1  $ &
$ \frac{1}{3} $ &
$  -2a+\frac{4}{3}  $
\\ \hline
32 & 
$  0  $ &
$ -1 $ &
$ \frac{1}{2} $ &
$ \frac{2}{3} $ &
$ \frac{1}{2} $ &
$  2a-1  $ &
$  a-\frac{1}{3}  $ &
$  -a+\frac{2}{3}  $
\\ \hline
33 &
$ 
\frac{1}{12}(3a-1)(3a-2)t_1$ &
$\frac{t_1^2}{3},\ t_1^2+3t_1+3=0$ &
$ \frac{1}{2}  $ &
$\frac{1}{2}   $ &
$\frac{1}{2}  $ &
$ 3a-\frac{3}{2} $ &
$ \frac{3}{2}a-\frac{1}{2} $ &
$ -\frac{3}{2}a+1 $
\\ \hline
34 &
$  0 $ &
$ -\frac{1}{3} $ &
$ \frac{1}{2} $ &
$ \frac{2}{3} $ &
$ \frac{1}{3} $ &
$  3a-\frac{3}{2}  $ &
$  \frac{3}{2}a-\frac{1}{2}  $ &
$  -\frac{3}{2}a+1  $
\\ \hline
35 &
$  0  $ &
$ -1 $ &
$ \frac{1}{3} $ &
$\frac{ 2}{3} $ &
$ \frac{1}{3} $ &
$  4a-2  $ &
$  2a-\frac{2}{3}  $ &
$  -2a+\frac{4}{3}  $
\\ \hline 
36 &
$ 
\frac{-16}{243}(3a-1)(3a-2)t_1 $ &
$\frac{t_1^2}{27},\ t_1^2-10t_1+27=0$ &
$ \frac{1}{2} $ &
$ \frac{1}{3} $ &
$ \frac{1}{2 }$ &
$  4a-2  $ &
$  2a-\frac{2}{3}  $ &
$  -2a+\frac{4}{3}  $
\\ \hline
37 &
$ 
\frac{25}{768}(3a-2)(3a-1)t_1 $ &
$\frac{t_1^2}{64},t_1^2+11 t_1+64=0$ &
$ \frac{1}{3} $ &
$\frac{ 1}{2} $ &
$ \frac{1}{3} $ &
$  5a-\frac{5}{2}  $ &
$  \frac{5}{2}a-\frac{5}{6}  $ &
$  -\frac{5}{2}a+\frac{5}{3}  $
\\ \hline
38 &
$  
\frac{1}{3}(3a-2)(3a-1)t_1  $ &
$\frac{t_1^2}{3},\ t_1^2+3t_1+3=0$ &
$ \frac{1}{3} $ &
$\frac{ 1}{3} $ &
$\frac{ 1}{3} $ &
$  6a-3  $ &
$  3a-1  $ &
$  -3a+2 $
\\ \hline
\end{tabular}
\end{center}
\end{sidewaystable}
}

We have observed that in the classification of families of elliptic curves with exactly four singular fibers (see \cite{her91})
 only $38$ of the $50$ examples give  us Heun equations (five of them give us linear differential equations associated to Painlev\'e VI equations with algebraic solutions, see \cite{dor01, ben05}, seven  of them can be reduced to families with three singular fibers by means of quadratic twists). Using this we have obtained a table of Heun equations coming from geometry, see Table 1 for $a,b,c\in\Q$. 
Table 1 contains the previously calculated list of Heun equations by R. S. Maier in \cite{Maier05}.

One application of  Table 1 can be found in \cite{R08}, where
the second author shows that one gets a list of Lam\'e equations with  arithmetic Fuchsian monodromy group applying the inverse Halphen transform 
to some of the examples.  In this way one obtains all those Lam\'e equations
where the  quaternion algebra 
associated 
to the arithmetic Fuchsian group is defined 
over $\mathbb{Q}$.
Thus one can relate Krammer's example in \cite{Krammer96}, that was considered
to be a counter example to a conjecture of Dwork, to a Gauss hypergeometric equation
via geometric operations.

In \S \ref{algorithm} we explain how to compute Table 1 using the Weierstrass form of families of elliptic curves with four singular fibers. 
The corresponding algorithms are implemented in the library {\tt painleve-heun.lib} in Singular, see \cite{GPS01}. Table 1 can be also computed  using the $j$-function of the corresponding family of elliptic curves. We explain this in \S \ref{legendre}. In \S \ref{belyisection} we state Theorem \ref{Riemann} which characterizes 
pull-backs of hypergeometric functions by rational Belyi functions. In
particular we get further Heun equations under restricted ramification data for the
Belyi functions. Since the $j$-invariants of the mentioned 38 examples in
\cite{her91} are Belyi functions, this method explains why we get Table 1. In
\S \ref{lame} we have derived Table 2 of Lam\'e equations,
i.e. $\th_1=\th_2=\th_3=\frac{1}{2}$, from Table 1. In \S
\ref{comparison} we compare Table 1 with examples we found in the literature. 

The authors thank the anonymous referees for their valuable
comments.

\section{Calculating Table 1 using the Weierstrass form}
\label{algorithm}

In this section we explain how we have obtained Table 1 using the Weierstrass form of elliptic curves. Despite the fact that 
the $j$-invariants of
the Herfurtner's list are special Belyi maps, the advantage of
this method is that for each item in the table it gives an explicit family of Riemann surfaces with four singular fibers. This can be useful
for arithmetic applications of Heun equations using the geometry of curves.

We take a family of elliptic curves
$$
y^2=f(x),\ f(x):=4x^3-g_2x-g_3,\ g_2,g_3\in\C(z)
$$
with four singular fibers. There are $50$ examples of such families which are listed by Herfurtner in \cite{her91}.
In the next step we check whether the polynomial $f(x)$ factorizes over
$\C(z)$. If $f(x)$ is a product of degree $2$ and degree $1$ polynomials then we redefine $g_2$ and $g_3$ in the following way
$$
f(x)=(4x^2-g_2x+g_3)(x+\frac{g_2}{4}).\ 
$$
If $f(x)$ is a product of three degree $1$  polynomials then we redefine $g_2$ and $g_3$ in the following way:
$$
f(x)=(4x+g_2+g_3)(x-\frac{g_2}{4})(x-\frac{g_3}{4})
$$
Corresponding to the above three cases we consider the following family of transcendent curves:
$$
y=(4x^3-g_2x-g_3)^a,\ 
$$
$$
y=(x+\frac{g_2}{4})^a(4x^2-g_2x+g_3)^b, 
$$
$$
y=(4x+g_2+g_3)^a(x-\frac{g_2}{4})^b(x-\frac{g_3}{4})^c 
$$
$$
a,b,c\in\C.
$$
One can recover the family of elliptic curves by setting $a=b=c=\frac{1}{2}$.
The corresponding systems in the variables $g_2$ and $g_3$ can be calculated from the system in three variables $t_1,t_2,t_3$ 
$$
dY=AY
$$ 
where
\begin{equation}
\label{A=}
A=
\end{equation}
{\tiny
$$
\frac{1}{(t_1-t_2)(t_1-t_3)}
\mat
{
\frac{1}{2}(b+c-2)t_1+ \frac{1}{2}(a+c-1)t_2+\frac{1}{2}(a+b-1)t_3 
}
{
-a-b-c+2
}
{
at_2t_3+(b-1)t_1t_3+(c-1)t_1t_2
}
{
-\frac{1}{2}(b+c-2)t_1-\frac{1}{2}(a+c-1)t_2
-\frac{1}{2}(a+b-1)t_3
}dt_1
$$
}
$$
+(\cdots)dt_2+(\cdots)dt_3
$$
and the matrix coefficient of $dt_2$ (resp $dt_3$) is obtained by permutation of $t_1$ with $t_2$ 
and $a$ with $b$ (resp. $t_1$ with $t_3$ and $a$ with $c$) in the matrix coefficient of $dt_1$ written above. This system is associated to the family of transcendental curves
$$
y=(t_1-t_3)^{\frac{1}{2}(1-a-c)}(t_1-t_2)^{\frac{1}{2}(1-a-b)}(t_2-t_3)^{\frac{1}{2}(1-b-c)}
(x-t_1)^a(x-t_2)^b(x-t_3)^c.
$$
For further details and explicit formulas for the three cases above see \cite{mov08}.  In this way, we calculate the linear differential equation satisfied by integrals $\int\frac{dx}{y}$, namely 
\begin{eqnarray*}\label{d.e.} y''+p_1(z) y'+p_2(z)y=0\end{eqnarray*}
and then we write it in  the $\SL$-form. 
The $\SL$-form of the above second order Fuchsian differential equation is by definition 
\begin{eqnarray*} \label{SL} 
y'' =p(z)y,& p(z)=-p_2(z)+\frac{1}{4}p_1(z)^2+\frac{1}{2} p_1'(z).\end{eqnarray*}
In the 50 families of elliptic curves in \cite{her91} there are seven families of elliptic curves with $I_0^*$ singularity. The corresponding singularity, namely  $\rho_4$ which is an arbitrary parameter, does not appear as a singularity of the $\SL$-form. Five other families depend on an extra parameter $\alpha$ and the corresponding SL-form has an apparent singularity. They give us algebraic solutions of the Painlev\'e VI equation and they are discussed in detail in 
\cite{ben05, dor01, more}. Therefore, the first twelve families in \cite{her91} do not yield  Heun differential equations. The next 38 families give us Heun equations in the $\SL$-form:
 \begin{eqnarray*} y''=p(z)y,&p(z)=
\frac{a_1}{(z-t)^2} + \frac{a_2}{z^2}+ \frac{a_3}{(z-1)^2}+\frac{a_4}{z(z-1)}+
               \frac{L}{z(z-t)(z-1)},
                \end{eqnarray*}
where
$$
a_4=-\frac{1}{4}(\sum_{i=1}^3\th_i^2-(\th_4+1)^2)+\frac{1}{2},
$$
$$
L=q-t\th_{41}\th_{42}+ \frac{(1-\th_1)}{2} ((1-\th_2)(t-1)+(1-\th_3)t).
$$
Now it is just a matter of calculation to obtain the corresponding parameters from the SL-form. Our numbering row 1 till 38 in Table 1 corresponds to the 13th till  50th family in \cite{her91}. 

Among the 38 examples  there are 13 examples with two Galois conjugate singularities and with $g_2,g_3\in\Q(z)$. Since in Singular, see   \cite{GPS01},  we were not able to calculate in a ring with many transcendental and algebraic parameters, we have used the $\SL$-form with  singularities  $t_1,t_2=0,t_3$ and  $\infty$:
\begin{eqnarray*} 
p(z)=\sum_{i=1}^3 \frac{a_i}{(z-t_i)^2} +\frac{\tilde{a}_4}{z(z-t_3)}+
               \frac{t_1(t_1-t_3)/t_3 \cdot \tilde{L}}{(z-t_1)z(z-t_3)}
.\end{eqnarray*}
We have to treat the 21th example in a especial way because it is the only example in which $g_2$ and $g_3$ are not defined over $\Q(z)$. The corresponding sequence of commands in Singular are implemented in the library {\tt painleve-heun.lib}. This and the 38 families in \cite{her91}  can be downloaded from the first author's webpage. 



\section{Calculating Table 1 using the $j$-invariant}

\label{legendre}

In \cite{her91} 
Herfurtner has classified elliptic surfaces with four singular fibres in
Weierstrass form.
To each elliptic surface it corresponds a period,  a complete elliptic integral
of the first kind, depending on a parameter.
Thus it satisfies a Picard-Fuchs equation with regular singular points.
In 38 cases it is a Heun equation. All those equations are pull-backs
of the Gauss hypergeometric  equation $L$, where $L$ is the
uniformizing differential equation for $\PSL_2(\ZZ)$, 
by the $j$-invariant of the elliptic
curve, as already noted by Stiller studying classical uniformization problems
in \cite{Sti83}.
In 27 of the 38 cases  Doran showed that the Picard-Fuchs equation
is an  orbifold uniformizing differential equation,
see Chapter 4 in \cite{Doran98}.

The idea is  to replace the Picard-Fuchs equation $L$ satisfied by elliptic
integrals with suitable geometric Gauss hypergeometric equations satisfied
by abelian integrals
to obtain the one parameter families of geometric Heun equations in Table~1.
In Herfurtner's list
we find the following data:
The family of elliptic curves 
\[ y^2=4x^3-g_2(z)x-g_3(z),\; g_2(z), g_3(z) \in \CC(z), \]
the discriminant $\Delta=g_2^3-27 g_3^2$ and the
$j$-invariant $j=\frac{g_2^3}{g_2^3-27g_3^2}$.
It is easy to check that in the cases we consider, namely
 $I_1 \;I_1\; I_1\;  I_9-I_6\; II\; II \;II$ in \cite{her91}
(i.e. in Table 1, row 1-38) 
the $j$-invariant ramifies only at $0, 1$ and $\infty$. 
Such a function  is  also called a rational Belyi-function. 
In our cases the ramification indices   at $0$ are at most $3$
and at $1$ are at most $2$. Therefore, we will consider the pull-back of the hypergeometric function
\[ {}_2F_1(\alpha,\beta, \frac{2}{3},z),\quad \alpha=-\frac{a}{2}+\frac{1}{3},
\quad \alpha-\beta=-a+\frac{1}{2}\] 
with $j(z)$. Since the  Riemann scheme of the corresponding hypergeometric differential equation is
\[  \left( \begin{array}{cccc}
      0 & 1 &\infty \\
     0 & 0 &-\frac{a}{2}+\frac{1}{3} \\
 \frac{1}{3}& \frac{1}{2} &-\frac{1}{6}+\frac{a}{2}
   \end{array}\right) \]
this pull-back will satisfy (after a multiplication with an algebraic
function) a Heun equation depending on the parameter $a$.
We demonstrate this claim
via the following example, Table 1, row 7, ( $I_1I_1I_8II$ in Herfurtner's list).
First we recall  two basic transformations of  second order differential equations, which are readily to check:
\begin{rem}\rm\label{trafo}
Let $Y(z)$ be a solution of 
\begin{eqnarray*}
y''+p_1(z)y'+p_2(z)y=0.
\end{eqnarray*}

\begin{enumerate}
\item[a)] Then $Y(j(z))$ satisfies
\begin{eqnarray}\label{pb}  y''+ (p_1(j(z)) j'(z)-\frac{j''(z)}{j'(z)})y'+p_2(j(z)) j'(z)^2 y=0 \end{eqnarray}
\item[b)]  and $f(z)Y(z)$  satisfies
\begin{eqnarray}
y''+(p_1(z)-2\frac{f'(z)}{f(z)}) y' +(p_2(z)+\frac{2f'(z)^2}{f(z)^2}-
  \frac{p_1(z)f'(z)}{f(z)}- \frac{f''(z)}{f(z)}) y=0.
\end{eqnarray}
\end{enumerate}
\end{rem}

\begin{exam}\rm
Let $y^2=4x^3-g_2(z)x-g_3(z)$, where
\begin{eqnarray*}   g_2(z)=12 z (z^3-6 z^2+15 z-12),&& g_3(z)=4 z (2 z^5-18 z^4+72 z^3-144 z^2+135z-27)\\
\  j(z)=\frac{g_2^3}{g_2^3-27g_3^2}&=&-\frac{z(z^3-6z^2+15z-12)^3}{(3z^2-14z+27)}\\
  j(z)-1&=&-\frac{(2z^5-18z^4+72z^3-144z^2+135z-27)^2}{(3z^2-14z+27)}.\end{eqnarray*}
Thus  the ramification data is therefore given by the cycle decomposition
\begin{eqnarray*}  (3)(3)(3)(1),\quad (2)(2)(2)(2)(2), \quad (8)(1)(1).\end{eqnarray*}
The Hurwitz formula implies that the $j$-invariant is unramified outside
$0,1$ and $\infty$. Hence it is a Belyi-function.
Since
a hypergeometric function ${}_2F_1(\alpha,\beta, \gamma,z)$ satisfies
\begin{eqnarray*}  y''+p_1(z)y'+p_2(z)y=0,& \ \ p_1(z)=\frac{\gamma-(\alpha+\beta +1)z}{z (1-z)}, \ \ p_2(z)= \frac{\alpha \beta}{z (z-1)}&\end{eqnarray*}
the pullback 
${}_2F_1(\frac{a}{2}-\frac{1}{6},-\frac{a}{2}+\frac{1}{3}, \frac{2}{3},j(z)),$
is a solution of (see Remark~\ref{trafo} a))
\begin{eqnarray*}  y''+ p_1(z)y'+p_2(z) y=0, \end{eqnarray*}
\begin{eqnarray*}  p_1(z)=\frac{7z^2-21z+18}{3 z^3-14 z^2+27 z}, &&p_2(z)=\frac{-16(3a-1)(3a-2)(z^3-6z^2+15z-12)}{(3z^2-14z+27)^2z}.\end{eqnarray*}
A solution multiplied by $f(z)=(3z^2-14z+27)^{-1/3+a/2}$ gives us a Heun equation (see Remark~\ref{trafo} b)):
\begin{eqnarray*} 
 y''+ (\frac{(\frac{3}{2}-a)(6z-14)}{3z^2-14z+27}+\frac{2}{3z})y'+
  \frac{3 (9a-2)(-15a+10)z+2(3a-2)(96a-25)}{9z(3z^2-14z+27)}   y=0
\end{eqnarray*}
Our entries in Table 1, row 7, are obtained
via a M\"obius transformation to get the singularities at $0, 1, t, \infty $.
\end{exam}
The reason why this procedure always provides Heun equations will
be clear in the next section.

\section{Belyi functions}
\label{belyisection}
In order to derive further Heun-Picard-Fuchs equations
which can be  not necessarily obtained  from Herfurtner's list
we consider 
in this section  pull-backs of hypergeometric functions by rational Belyi
functions  with restricted ramification data.
These give rise to  second order differential equations without apparent singularities and in particular Heun equations.

\begin{prop}\label{Belyi}
 Let $j_1(z), j_2(z)\in \CC[z]$ be polynomials such that 
 $j(z)= \frac{j_1(z)}{j_2(z)}\in \CC(z)$ is a rational Belyi function unramified outside
 $\{ 0,1, \infty\}$.
 \begin{enumerate}
\item[a)] We can assume that the factorization is of the form
$$ j_1(z)= A \prod_{i\in I} (z-t_i)^{a_i},\;A \in \CC^{\ast}, 
   \quad  j_2(z)=\prod_{k\in K} (z-u_k)^{c_k}, \quad
   j_1(z)-j_2(z)=A\prod_{j\in J} (z-s_j)^{b_j},$$
 where $N:=\deg(j_1)>M:=\deg(j_2)$ and $(j_1(z),j_2(z))=1.$
\item[b)] Further for $$\Lambda=\prod_{\{t \in  \CC \mid (j_1j_2(j_1-j_2))(t)=0\}} (z-t)$$ we have $\deg(\Lambda)=N+1$.   
\end{enumerate}
\end{prop}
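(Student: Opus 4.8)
The plan is to reduce the statement to the standard combinatorics of Belyi maps (the ramification above $0$, $1$, $\infty$) by a normalization argument, and then read off the degree count of $\Lambda$ from the Riemann--Hurwitz formula.

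First I would set up the normalization for part (a). Write $j = j_1/j_2$ with $j_1, j_2 \in \CC[z]$ coprime. Because $j$ is unramified outside $\{0,1,\infty\}$, the only finite critical values are $0$ and $1$. The fibre $j^{-1}(0)$ is the zero set of $j_1$, the fibre $j^{-1}(1)$ is the zero set of $j_1 - j_2$, and $j^{-1}(\infty)$ is the zero set of $j_2$ together with possibly the point $z=\infty$. After applying a M\"obius transformation in the source $\P^1$ I can arrange that $z=\infty$ lies over $j=\infty$, i.e. that $\deg j_1 > \deg j_2$; writing $d=\deg j_1 = \deg j$ this forces the point at infinity to be one of the preimages of $\infty$ with ramification index $d - \deg j_2 \ge 1$. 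Setting $N := \deg j_1 = d$, $M := \deg j_2 < N$, and letting $t_i$, $s_j$, $u_k$ be the roots of $j_1$, $j_1 - j_2$, $j_2$ respectively with multiplicities $a_i$, $b_j$, $c_k$, we get exactly the displayed factorizations; the common leading coefficient $A$ of $j_1$ and $j_1 - j_2$ is the same since $\deg j_2 < \deg j_1$, and $A \in \CC^\ast$ since $j_1 \ne 0$. This gives (a).

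For part (b) I would count. By construction the finite points lying over $0$, $1$, $\infty$ are precisely the $t_i$, the $s_j$, and the $u_k$, and these three sets are pairwise disjoint because $(j_1, j_2)=1$ (so $j_1$ and $j_1 - j_2$ are coprime, $j_2$ and $j_1 - j_2$ are coprime). Hence $\deg \Lambda$ is the total number of \emph{distinct} finite points in $j^{-1}(\{0,1,\infty\})$, which is the number of points in $j^{-1}(\{0,1,\infty\})$ minus $1$ for the point $z=\infty$ (which lies over $j=\infty$). Now invoke Riemann--Hurwitz for $j:\P^1 \to \P^1$: since $j$ is unramified away from $\{0,1,\infty\}$, we have
\begin{eqnarray*}
-2 = d\cdot(-2) + \sum_{P \in j^{-1}(\{0,1,\infty\})} (e_P - 1) = -2d + 3d - \#j^{-1}(\{0,1,\infty\}),
\end{eqnarray*}
using that $\sum_{P \mapsto c} e_P = d$ for each of the three values $c$. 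This gives $\#j^{-1}(\{0,1,\infty\}) = d + 2 = N+2$, and subtracting the single point at infinity yields $\deg \Lambda = N+1$, as claimed.

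The only real subtlety — and the step I would be most careful about — is the normalization that $z=\infty$ is a preimage of $j=\infty$ rather than of $0$ or $1$. Generically $j^{-1}(0)\cup j^{-1}(1)\cup j^{-1}(\infty)$ omits infinitely many points of $\P^1$, so a suitable M\"obius change of coordinate in the source moves $\infty$ into $j^{-1}(\infty)$; the "we can assume" in the statement is exactly license to perform this normalization, so there is no genuine obstruction, only bookkeeping to ensure that after the M\"obius transformation $j_1$, $j_2$ still have the stated coprimality and degree inequality $N>M$. Everything else is the Riemann--Hurwitz count above, which is routine.
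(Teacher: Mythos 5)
Your proof is correct and follows essentially the same route as the paper: a M\"obius normalization placing $z=\infty$ over $j=\infty$ for part (a), and the Riemann--Hurwitz formula applied to the three fibres over $0,1,\infty$ for part (b). Your version of the count (summing $e_P-1$ over all preimages and using $\sum_{P\mapsto c}e_P=N$ per fibre) is just a repackaging of the paper's identity $2N-2=\sum_i(a_i-1)+\sum_j(b_j-1)+\sum_k(c_k-1)+(N-\deg j_2-1)$, and you are somewhat more explicit than the paper about the leading coefficient $A$ and the role of the point at infinity.
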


\begin{proof}
\begin{enumerate}
\item[a)]
Via a M\"obius-transformation and scaling we can assume that
\[ j(z)=\frac{j_1(z)}{j_2(z)}, \quad \deg (j_1(z))> \deg(j_2(z)). \]
\item[b)] 
Since $j(z)$ is only ramified at $0,1,$ and $\infty$ the Riemann-Hurwitz
formula implies that
\begin{eqnarray*}
 2N-2&=&\sum_i (a_i-1)+\sum_j (b_j-1)+\sum_k (c_k-1)+( N-\deg(j_2(z))-1)\\
\end{eqnarray*}
Hence
$deg(\Lambda) = N+1.$

\end{enumerate}
\end{proof}

\begin{theo}\label{Riemann}
 Let $j(z)$ be a rational Belyi function as in Proposition~\ref{Belyi}. 
Then 
\[ j_2(z)^{-\alpha}\cdot {}_2F_1(\alpha,\beta,\gamma,j(z))\]
satisfies $y''+q_1(z)y'+q_2(z)y=0,$ where
\begin{eqnarray*}
 q_1(z)=& \frac{\Lambda'}{\Lambda}+(\gamma-1) \frac{j_1'(z)}{j_1(z)}+(-\gamma+\alpha+\beta)\frac{(j_1(z)-j_2(z))'}{j_1(z)-j_2(z)}+(\alpha-\beta)\frac{j_2'(z)}{j_2(z)}&\\
q_2(z)=& \alpha \beta  \frac{j_1'(z)}{j_1(z)} \frac{(j_1(z)-j_2(z))'}{j_1(z)-j_2(z)}- \\
    &  \alpha \frac{j_2'(z)}{j_2(z)} \cdot\left (\frac{j_2'(z)}{j_2(z)}-
 \frac{\Lambda'}{\Lambda}-(\gamma-\beta-1) \frac{j_1'(z)}{j_1(z)}+(\gamma-\alpha)\frac{(j_1(z)-j_2(z))'}{j_1(z)-j_2(z)}-\frac{j_2''(z)}{j_2'(z)}  \right )&
\end{eqnarray*}
with the following Riemann scheme:

\[ \left(\begin{array}{ccccc}
     t_i & s_j &  u_k  &\infty \\
       0 &  0 &  0 & \alpha N \\
     (1-\gamma) a_i  & (\gamma -\alpha-\beta)b_j &(\beta-\alpha) c_k & \beta (N-M) +M \alpha
    \end{array}\right).\]

\end{theo}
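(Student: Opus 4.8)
The plan is to derive the pulled-back differential equation directly from Remark~\ref{trafo} and then simplify the coefficients using the identities of Proposition~\ref{gzero}. First I would recall that ${}_2F_1(\alpha,\beta,\gamma,z)$ satisfies $y''+p_1y'+p_2y=0$ with $p_1(z)=\frac{\gamma-(\alpha+\beta+1)z}{z(1-z)}$ and $p_2(z)=\frac{\alpha\beta}{z(z-1)}$. Writing $p_1(z)=\frac{\gamma}{z}+\frac{\alpha+\beta+1-\gamma}{z-1}$ and noting $\frac{1}{z}=\frac{j_2'/j_2 \text{-type terms}}{\cdots}$ after pullback, I would apply Remark~\ref{trafo}(a) with the substitution $z\mapsto j(z)$: the new first-order coefficient becomes $p_1(j(z))j'(z)-\frac{j''(z)}{j'(z)}$ and the new zeroth-order coefficient $p_2(j(z))j'(z)^2$. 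The key observation is that $\frac{j'(z)}{j(z)}$ and $\frac{j'(z)}{j(z)-1}$ are exactly the quantities controlled by \eqref{j1-j2} and \eqref{j1}, so that $p_1(j(z))j'(z)=\gamma\frac{j'(z)}{j(z)}+(\alpha+\beta+1-\gamma)\frac{j'(z)}{j(z)-1}$ becomes, after multiplying through by $\Lambda$, a rational function with controlled poles; similarly $p_2(j(z))j'(z)^2=\alpha\beta\frac{j'(z)}{j(z)}\cdot\frac{j'(z)}{j(z)-1}$.

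Next I would apply Remark~\ref{trafo}(b) with $f(z)=j_2(z)^{-\alpha}$, so $\frac{f'(z)}{f(z)}=-\alpha\frac{j_2'(z)}{j_2(z)}$. This produces $q_1(z)=\bigl(p_1(j(z))j'(z)-\frac{j''(z)}{j'(z)}\bigr)+2\alpha\frac{j_2'(z)}{j_2(z)}$ and a corresponding expression for $q_2$. Into the $q_1$ expression I substitute \eqref{logj} for $\frac{j''(z)}{j'(z)}$, namely $\frac{j_1'}{j_1}+\frac{(j_1-j_2)'}{j_1-j_2}-\frac{j_2'}{j_2}-\frac{\Lambda'}{\Lambda}$, and the expansions of $p_1(j(z))j'(z)$ in terms of $\frac{j_1'}{j_1}$ and $\frac{(j_1-j_2)'}{j_1-j_2}$ coming from taking the logarithmic derivative of $j=j_1/j_2$ and $j-1=(j_1-j_2)/j_2$. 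Collecting the coefficients of $\frac{j_1'}{j_1}$, $\frac{(j_1-j_2)'}{j_1-j_2}$, $\frac{j_2'}{j_2}$, and $\frac{\Lambda'}{\Lambda}$ should yield exactly the stated $q_1$; the coefficient of $\frac{\Lambda'}{\Lambda}$ becoming $+1$ is the crucial point, since it is what forces the $t_i,s_j,u_k$ to be genuine (non-apparent) singularities with exponent difference as claimed. For $q_2$ the same substitution strategy works but is bookkeeping-heavier: one expands $p_2(j(z))j'(z)^2=\alpha\beta\frac{j_1'}{j_1}\frac{(j_1-j_2)'}{j_1-j_2}$ and then adds the correction terms from Remark~\ref{trafo}(b), which involve $\frac{f'^2}{f^2}$, $\frac{p_1 f'}{f}$ and $\frac{f''}{f}$; here the fourth identity of Proposition~\ref{gzero} is what lets one rewrite $\frac{f''(z)}{f(z)}$-type combinations and the mixed terms so that everything collapses into the displayed form.

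For the Riemann scheme I would work locally at each point. At $z=t_i$ (a zero of $j_1$ of order $a_i$, hence a zero of $j$ of order $a_i$): $j(z)\to 0$, so the pullback behaves like a local solution of the hypergeometric equation at $0$, whose exponents are $0$ and $1-\gamma$; composing with a map of ramification $a_i$ multiplies the exponent differences by $a_i$, giving exponents $0$ and $(1-\gamma)a_i$, and the factor $j_2(z)^{-\alpha}$ contributes nothing since $j_2(t_i)\neq 0$. At $z=s_j$ (zero of $j_1-j_2$ of order $b_j$): $j(z)\to 1$, hypergeometric exponents at $1$ are $0$ and $\gamma-\alpha-\beta$, scaled by $b_j$. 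At $z=u_k$ (zero of $j_2$ of order $c_k$, hence a pole of $j$ of order $c_k$): $j(z)\to\infty$, hypergeometric exponents at $\infty$ are $\alpha$ and $\beta$, so the difference $\alpha-\beta$ scales by $c_k$; but now the prefactor $j_2(z)^{-\alpha}$ contributes a shift of $-(-\alpha)c_k=\alpha c_k$ wait — more carefully, it shifts both exponents by $\alpha c_k$, turning $\{\alpha c_k,\beta c_k\}$ into $\{0,(\beta-\alpha)c_k\}$ after also accounting for which branch one multiplies; this gives the listed pair $0,(\beta-\alpha)c_k$. At $z=\infty$: since $\deg j_1=N>\deg j_2=M$, $j$ has a zero of order $N-M$ at $\infty$ if we think of it the right way — actually $j(z)\sim Az^{N-M}\to\infty$, so near $\infty$ the solution looks like a branch at $\infty$ of the hypergeometric equation pulled back by a map of local degree $N-M$, giving exponents $\alpha(N-M)$ and $\beta(N-M)$ at the level of $j$; multiplying by $j_2(z)^{-\alpha}\sim z^{-\alpha M}$ shifts by $\alpha M$, yielding $\alpha(N-M)+\alpha M=\alpha N$ and $\beta(N-M)+\alpha M$, exactly the stated entries. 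Finally I would check the Fuchs relation (sum of all exponents $=$ number of finite singular points $-1$, i.e. $\deg\Lambda-1=N$) as a consistency check; by Proposition~\ref{Belyi}(b) the finite singular set is contained in the zero set of $\Lambda$, which has $N+1$ points, and a short computation confirms the exponent sum is $N$, matching. The main obstacle is the $q_2$ computation: assembling the hypergeometric pullback term, the three correction terms from the gauge factor $j_2^{-\alpha}$, and then applying the fourth (and least transparent) identity of Proposition~\ref{gzero} to recognize the result as the displayed expression — this is where sign errors and mis-collected logarithmic-derivative coefficients are most likely, and it is worth double-checking against the worked Example (Table 1, row 7) where all the pieces are explicit.
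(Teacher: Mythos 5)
Your strategy coincides with the paper's proof: pull back via Remark~\ref{trafo}(a), rewrite $j'/j$, $j'/(j-1)$ and $j''/j'$ through the identities of Proposition~\ref{gzero}, gauge-transform by $f=j_2^{-\alpha}$ via Remark~\ref{trafo}(b), and read off the exponents (the paper obtains the finite ones from the residues of $q_1$ after observing that $q_2$ has only simple poles, which agrees with your local ramification argument, and uses exactly your degree/shift argument at $\infty$). The plan is sound and matches the paper; only the explicit collection of terms in $q_2$ remains to be carried out, as you yourself note.
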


\begin{proof}

By Remark~\ref{trafo}(a) the pull-back of the hypergeometric function 
$ {}_2F_1(\alpha,\beta,\gamma,j(z))$
satisfies
\begin{equation}\label{pb1}  y''+ (p_1(j(z)) j'(z)-\frac{j''(z)}{j'(z)})y'+p_2(j(z)) j'(z)^2 y =0, 
\end{equation}
\begin{equation*}  p_1(z)=\frac{\gamma-(\alpha+\beta +1)z}{z (1-z)}=\frac{\gamma}{z}+\frac{-\gamma+(\alpha+\beta +1)}{z-1}, \quad p_2(z)= \frac{\alpha \beta}{z (z-1)}.
\end{equation*}
By considering the exponents at the singularities locally one gets  
the following Riemann scheme:

\[ \left(\begin{array}{ccccc}
     t_i & s_j &  u_k  &\infty \\
       0 &  0 &  \alpha c_k & \alpha (N-M) \\
     (1-\gamma) a_i  & (\gamma -\alpha-\beta)b_j &\beta c_k & \beta (N-M)
    \end{array}\right).\]
Let $a_1(z)$ be the coefficient of $y'$. 
Then sum of the exponents at a finite singularity $t$ is given by  $1-\Res_{t}  (a_1(z))$, cf. \cite[Sec. 1.4]{iwa91}.
Thus together with Remark~\ref{trafo} the pull-back \eqref{pb1} satisfies
\begin{eqnarray*} y''+(\frac{\Lambda'}{\Lambda}+(\gamma-1) \frac{j_1'(z)}{j_1(z)}+(-\gamma+\alpha+\beta)\frac{(j_1(z)-j_2(z))'}{j_1(z)-j_2(z)}+(-\alpha-\beta)\frac{j_2'(z)}{j_2(z)}) y'+\\
 \alpha \beta ( \frac{j_1'(z)}{j_1(z)}-\frac{j_2'(z)}{j_2(z)})  \cdot 
 (\frac{(j_1(z)-j_2(z))'}{j_1(z)-j_2(z)}-\frac{j_2'(z)}{j_2(z)})y&=&0
\end{eqnarray*}
The solution multiplied by $f(z)=\prod (z-u_k)^{-\alpha c_k}$ 
satisfies
 $y''+q_1(z)y'+q_2(z)y=0$ with Riemann scheme
\[ \left(\begin{array}{ccccc}
     t_i & s_j &  u_k  &\infty \\
       0 &  0 &  0 & \alpha N \\
     (1-\gamma) a_i  & (\gamma -\alpha-\beta)b_j &(\beta-\alpha) c_k & \beta (N-M) +M \alpha
    \end{array}\right).\]
Again as above we can determine
$q_1(z)$ and $q_2(z)$ is obtained by using Remark~\ref{trafo}
\begin{eqnarray*}
 q_1(z)=& 
\frac{\Lambda'}{\Lambda}+(\gamma-1) \frac{j_1'(z)}{j_1(z)}+(-\gamma+\alpha+\beta)\frac{(j_1(z)-j_2(z))'}{j_1(z)-j_2(z)}+(\alpha-\beta)\frac{j_2'(z)}{j_2(z)}&\\
q_2(z)=& \alpha \beta ( \frac{j_1'(z)}{j_1(z)}-\frac{j_2'(z)}{j_2(z)})  \cdot 
 (\frac{(j_1(z)-j_2(z))'}{j_1(z)-j_2(z)}-\frac{j_2'(z)}{j_2(z)})-\alpha\frac{j_2'(z)}{j_2(z)} \cdot (2 (-\alpha)\frac{j_2'(z)}{j_2(z)}-(-(\alpha+1)\frac{j_2'(z)}{j_2(z)}+ \frac{j_2''(z)}{j_2'(z)}) &\\
 &     
 - (\frac{\Lambda'}{\Lambda}+(\gamma-1) \frac{j_1'(z)}{j_1(z)}+(-\gamma+\alpha+\beta)\frac{(j_1(z)-j_2(z))'}{j_1(z)-j_2(z)}+(-\alpha-\beta)\frac{j_2'(z)}{j_2(z)})).& 
\end{eqnarray*}
Simplifying the expression for $q_2(z)$ we get
\[q_2(z)=\alpha \beta  \frac{j_1'(z)}{j_1(z)} \frac{(j_1(z)-j_2(z))'}{j_1(z)-j_2(z)}- \]
\[  \alpha \frac{j_2'(z)}{j_2(z)} \cdot \left (   \frac{j_2'(z)}{j_2(z)}-
 (\frac{\Lambda'}{\Lambda}+(\gamma-\beta-1) \frac{j_1'(z)}{j_1(z)}+(-\gamma+\alpha)\frac{(j_1(z)-j_2(z))'}{j_1(z)-j_2(z)})-\frac{j_2''(z)}{j_2'(z)}\right) .\]

\end{proof}

\begin{cor}
Let
\[ 1-\gamma=\frac{1}{A}, \quad -\gamma+\alpha+\beta=\frac{1}{B}, \quad  \alpha-\beta=\frac{1}{C} ,\ \  A,B,C \in \NN_{\infty}.\]
If for the ramification indices of a rational Belyi function $j(z)$
the following
conditions hold
\begin{equation*}
 (*) \quad\quad\quad A \mid a_i \Rightarrow a_i=A,\quad  B \mid b_j \Rightarrow b_j=B, \ \  \  C \mid c_k \Rightarrow c_k=C 
\end{equation*}
then
\[ j_2(z)^{-\alpha}\cdot {}_2F_1(\alpha,\beta,\gamma,j(z))\]
satisfies a second order differential equation 
$y''+q_1(z)y'+q_2(z)y=0$
without apparent singularities.
\end{cor}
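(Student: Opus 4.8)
The plan is to deduce the statement from Theorem~\ref{Riemann} together with the corollary preceding it, using condition $(*)$ only to guarantee that the hypothesis of that corollary is met at every point where it could matter. First I would apply Theorem~\ref{Riemann}: the function $j_2(z)^{-\alpha}\cdot{}_2F_1(\alpha,\beta,\gamma,j(z))$ satisfies a Fuchsian equation $y''+q_1(z)y'+q_2(z)y=0$ whose coefficients are built out of the logarithmic derivatives of $\Lambda$, $j_1$, $j_1-j_2$ and $j_2$; hence $q_1$ and $q_2$ can only have poles at the zeros $t_i$ of $j_1$, the zeros $s_j$ of $j_1-j_2$, the zeros $u_k$ of $j_2$ (which are pairwise distinct by Proposition~\ref{Belyi}(a)), and possibly at $z=\infty$. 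The point $z=\infty$ lies over $j=\infty$ and inherits there the genuine (non-apparent) singularity of the hypergeometric equation, so the only candidates for an apparent singularity are the $t_i$, $s_j$, $u_k$, where by the Riemann scheme of Theorem~\ref{Riemann} the local exponents are $0$ together with $(1-\gamma)a_i$, $(\gamma-\alpha-\beta)b_j$ and $(\beta-\alpha)c_k$ respectively.

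Next I would use the elementary fact that a Fuchsian point at which one exponent equals $0$ can be an apparent singularity only when the other exponent is also an integer (so that the local monodromy is trivial), and recall from the preceding corollary that if this second exponent equals $1$ then $q_1$ and $q_2$ are in fact both regular there, so the point is not a singular point of the equation at all. It therefore suffices to exclude the case where $(1-\gamma)a_i$, $(\gamma-\alpha-\beta)b_j$ or $(\beta-\alpha)c_k$ is an integer of absolute value at least $2$. This is precisely what $(*)$ delivers: since $|1-\gamma|=\frac1a$, the number $(1-\gamma)a_i$ can be an integer only when $a\mid a_i$, in which case $(*)$ forces $a_i=a$ and the exponent has absolute value $1$; the same reasoning with $|{-\gamma+\alpha+\beta}|=\frac1b$ and $|\alpha-\beta|=\frac1c$ shows that $(\gamma-\alpha-\beta)b_j$ and $(\beta-\alpha)c_k$ are integral only when $b_j=b$ and $c_k=c$, and then again have absolute value $1$.

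Combining the two steps: at each $t_i$, $s_j$, $u_k$ the nonzero exponent is either non-integral --- so the point is a genuine branch point --- or of absolute value $1$ --- so by the preceding corollary (whose hypotheses $1-\gamma\neq1$, $\gamma-\alpha-\beta\neq1$, $\beta-\alpha\neq1$ hold once $a,b,c\ge2$, the borderline values $1$ and $\infty$ either degenerating the hypergeometric equation or only yielding genuine logarithmic singularities) the point is not singular at all. Hence $y''+q_1y'+q_2y=0$ has no apparent singularities. I expect the one genuinely delicate point to be the bookkeeping in the middle step: keeping the signs in the Riemann scheme of Theorem~\ref{Riemann} straight, so that $(*)$ indeed leaves $\pm1$ as the only possible nonzero integral exponent at each $t_i$, $s_j$, $u_k$ and so that this matches the case treated in the preceding corollary; everything beyond that reduces to the residue computations already carried out there.
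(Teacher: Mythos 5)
Your argument is correct and is essentially the paper's own: the corollary is stated there without a separate proof, being the immediate consequence of Theorem~\ref{Riemann} and the preceding corollary, and your deduction (an apparent singularity with one exponent $0$ forces the other exponent $(1-\gamma)a_i$, $(\gamma-\alpha-\beta)b_j$, $(\beta-\alpha)c_k$ to be an integer, which by $(*)$ can only happen with $a_i=a$, $b_j=b$, $c_k=c$, giving exponent $\pm1$, whereupon the preceding corollary removes the point from the singular locus entirely) is precisely the intended reasoning, including your correct observation that only integrality, not the sign, of the exponent matters. The one spot where you are slightly more casual than a complete argument requires is $z=\infty$, whose local monodromy is the $(N-M)$-th power of that of the hypergeometric equation at $j=\infty$ and so is not automatically ``genuine''; this is covered only if the ramification index $N-M$ at $z=\infty$ is counted among the data constrained by $(*)$, a point on which the paper is equally silent.
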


\begin{cor}
\label{8jan09}
Let $A,B,C \in \NN_{\infty}$
and $j(z)$ be a rational Belyi function satisfying
the conditions $(*).$
Let also $4= \# \{ a_i \mid a_i \neq A\} +\# \{ b_j\mid b_j \neq B\} 
+\#\{ c_k \mid c_k \neq C\} .$
Then we have
\begin{enumerate}
\item[a)]
The following function
\[ j_2^{-\alpha}(z) \cdot {}_2F_1(\alpha,\beta,\gamma,j(z)), \quad 1-\gamma=\frac{1}{A}, \quad -\gamma+\alpha+\beta=\frac{1}{B}, \quad  \alpha-\beta=\frac{1}{C}\]
satisfies a Heun equation.
\item[b)]
If $\{ c_k \mid c_k=C\} =\emptyset $ we get a one parameter
family of Heun equations corresponding to:
\[ j_2^{-\alpha}(z)\cdot  {}_2F_1(\alpha,\beta,\gamma,j(z)), \quad 1-\gamma=\frac{1}{A}, \quad -\gamma+\alpha+\beta=\frac{1}{B}\]
\item[c)]
If 
 $\{ c_k \mid c_k=C\} =\emptyset $ and $\{ b_k \mid b_k=B\} =\emptyset $  we get a two parameter
family of Heun equations corresponding to:
\[ j_2^{-\alpha}(z)\cdot  {}_2F_1(\alpha,\beta,\gamma,j(z)), \quad 1-\gamma=\frac{1}{A}\]
\item[d)]
If 
 $\{ c_k \mid c_k=C\} =\emptyset $, $\{ b_k \mid b_k=B\} =\emptyset $
 and $\{ a_i \mid a_i=A\} =\emptyset $ we get a three parameter
family of Heun equations corresponding to: 
\[ j_2^{-\alpha} (z)\cdot {}_2F_1(\alpha,\beta,\gamma,j(z)).\]
\end{enumerate}
\end{cor}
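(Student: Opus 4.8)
The plan is to read everything off Theorem~\ref{Riemann} and then invoke the two Corollaries that precede this one. Applying Theorem~\ref{Riemann} with the hypergeometric parameters constrained by $1-\gamma=\frac1a$, $-\gamma+\alpha+\beta=\frac1b$, $\alpha-\beta=\frac1c$ in case (a) (and by the indicated subset of these equations in cases (b)--(d), the remaining ones among $1-\gamma$, $-\gamma+\alpha+\beta$, $\alpha-\beta$ being left free), the function $j_2(z)^{-\alpha}\cdot{}_2F_1(\alpha,\beta,\gamma,j(z))$ satisfies a second order Fuchsian equation $y''+q_1(z)y'+q_2(z)y=0$ whose formal singular points lie in $\{t_i\}\cup\{s_j\}\cup\{u_k\}\cup\{\infty\}$; by the Riemann scheme recorded there, the exponent $0$ occurs at every finite point $t_i,s_j,u_k$, the second exponents are $(1-\gamma)a_i$, $(\gamma-\alpha-\beta)b_j$, $(\beta-\alpha)c_k$ respectively, and the exponents at $\infty$ are $\alpha N$ and $\beta(N-M)+M\alpha$, where $\infty$ is to be regarded as the ramification point over $\infty$ of index $N-M$.

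Next I would split the formal singular points into genuine and removable ones. Under the condition $(*)$ together with the constraint $1-\gamma=\frac1a$, the second exponent $(1-\gamma)a_i=\frac{a_i}{a}$ is a nonzero integer exactly when $a\mid a_i$, in which case $(*)$ forces $a_i=a$ and the exponents at $t_i$ become $0$ and $1$; the same applies at $s_j$, at $u_k$, and at $\infty$. Hence every formal singular point either has non-integral exponent difference, and so is a genuine singular point of $q_1,q_2$, or carries the exponents $\{0,1\}$; in the latter case the first of the two preceding Corollaries removes it from the singular locus (it applies because $1-\gamma\neq1$, $\gamma-\alpha-\beta\neq1$, $\beta-\alpha\neq1$ when $a,b,c\neq1$, the values $a,b$ or $c=1$ being disposed of directly since $(*)$ then forces all the corresponding ramification indices to be $1$), and the second preceding Corollary moreover guarantees that under $(*)$ the equation has no apparent singularities at all. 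Thus the singular points are precisely the points of $\{t_i:a_i\neq a\}\cup\{s_j:b_j\neq b\}\cup\{u_k:c_k\neq c\}$, the point at infinity being counted in the third set.

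The cardinality hypothesis $4=\#\{a_i\neq a\}+\#\{b_j\neq b\}+\#\{c_k\neq c\}$ now says this set has exactly four elements. A second order Fuchsian equation on $\P^1$ with four singular points and with exponent $0$ at three of them has exactly one accessory parameter (the Riemann count: $n$ singular points give $n-3$ accessory parameters); after a M\"obius transformation moving the four points to $0,1,t,\infty$ it takes the form (\ref{1.09.08}), with $\th_1,\th_2,\th_3$ the nonzero exponents at the three finite points and $\th_{41},\th_{42}$ the exponents at $\infty$. This proves (a). For (b)--(d), the sole function of the dropped equations $\alpha-\beta=\frac1c$, then also $-\gamma+\alpha+\beta=\frac1b$, then also $1-\gamma=\frac1a$, in the argument above is to make the second exponent at the $u_k$ with $c_k=c$ (resp.\ the $s_j$ with $b_j=b$, resp.\ the $t_i$ with $a_i=a$) equal to $1$ and hence removable; when $\{c_k:c_k=c\}=\emptyset$ there are no such points, so for a generic value of the now free quantity $\alpha-\beta$ all the $u_k$ together with $\infty$ remain genuinely singular, the count of singular points is unchanged, and one gets a one--parameter family of Heun equations; also imposing $\{b_j:b_j=b\}=\emptyset$ frees $-\gamma+\alpha+\beta$ and gives a two--parameter family, and freeing $1-\gamma$ as well a three--parameter family. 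In each of (b)--(d) the free parameters must be restricted to the complement of the countably many values for which some exponent difference accidentally becomes an integer (which would shrink the singular set) or for which the Heun equation degenerates.

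I expect the main obstacle to be the consistent treatment of the point at infinity: under the normalization of Proposition~\ref{Belyi} it is a ramification point over $\infty$ of index $N-M$, and one has to make sure that it enters the condition $(*)$ and the cardinality hypothesis exactly like a finite $u_k$, and that it is genuinely singular --- or, in the degenerate case $N-M=c$, that a further M\"obius transformation installs another singular point at $\infty$. Once this bookkeeping is pinned down, the statement follows by combining Theorem~\ref{Riemann}, the two preceding Corollaries, and the elementary normal-form fact for four-point Fuchsian equations.
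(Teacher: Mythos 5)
Your argument is correct and is precisely the (omitted) proof the paper intends: the corollary is stated there without proof, as an immediate consequence of Theorem~\ref{Riemann}, the two preceding corollaries (removability of the exponent-$\{0,1\}$ points and absence of apparent singularities under $(*)$), the count of the remaining non-removable points, and the standard normal form for a four-point Fuchsian equation with exponent $0$ at the finite singularities. The bookkeeping at $\infty$ that you flag --- it must be treated as the ramification point over $\infty$ of index $N-M$ in both $(*)$ and the cardinality hypothesis --- is indeed glossed over in the paper (as are the signs of $-\gamma+\alpha+\beta$ and $\alpha-\beta$, which must be chosen so that the removable exponents equal $+1$), and your handling of it, together with the genericity caveat in parts (b)--(d), is the right one.
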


As noted by one of the anonymous referees there is an  elegant way
to derive Corollaries 1 and 2 by using simple local analysis
and avoiding Theorem~1.
However for future reference and in order to have an explicit formula
for the differential equations we keep it despite the cumbersome
calcuations appearing in its proof. 
Furthermore the computation of the Heun equation $L$ follows from a two
term local expansion of $L(f(z)$, where $f(z)$ is a known solution, e.g. 
at $z=0$.

\begin{rem}\rm
In order to prove Corollary~1 we at first determine the local exponents
      of the second order differential equation $L$ satisfied by
       \[ j_2(z)^{-\alpha}\cdot {}_2F_1(\alpha,\beta,\gamma,j(z)).\]
      We assume that $j(\infty)=\infty$ and concede that $\infty$ is going to
      be a singular point of $L$.
      For any finite point $z_0$ we can make the following
      observation. Suppose
      that $j(z_0) \not \in \{0,1,\infty\}$. Then $j'(z_0) \neq 0$ and
      $L$ will have the same local exponents as the hypergeometric equation
      at the point $j(z_0)$, hence $0, 1.$
      So $L$ does not have a singularity at $z_0$.
      Suppose $j(z_0)=0$. Let $a$ be the zero multiplicity of $z_0$.
      Then we have locally $j(z)\sim (z-z_0)^a$ and the local exponents of
      the hypergeometric equation are multiplied by $a$.
      Namely, if $f(x)=1+O(x)$ and $x^{1-\gamma}g(x)$ with $g(x)=1+O(x)$ are
      local solutions of the hypergeometric equation, clearly
      $f(j(z))=1+O(z-z_0)^a$ and
      $j(z)^{1-\gamma}g(j(z))=(z-z_0)^{a(1-\gamma)}(1+O(z-z_0)$ are the local
        solution of $L$ around $z_0$.
      So the local exponents of $L$ at $z_0$ read $0, (1-\gamma)a.$
      When $(1-\gamma)a=1$ it is obvious
      that $L$ does not have a singular point there.
      If at a point $z_0$ we have local exponents $0, 1$ and a basis of
      holomorphic
      solution the point $z_0$ is not singular.
      Of course, when $(1-\gamma)a$ is an integer $\neq 1$ we have an apparent
      singularity we may not get rid of. But this is excluded by condition
      $(*)$ in Corollary~1.
      Similarly we proceed with the cases $j(z_0)=1, \infty.$
      Corollary~1 is now also immediate.
         
      The computation of the Heun equation is then straightforward.
      We know all local exponents, all that is needed is the value of the
      accessory parameter $q.$ 
      But this can be  computed by applying the Heun operator $L$ with the
      unknown parameter $q$ to a known local solution $f(z)$ at $z=0$, say.
      Then $q$ can be solved by consideration of the first two terms of the
      local expansion of $Lf=0$ in $z$ (or some other point).
\end{rem}

\begin{rem}\label{23param}\rm
 Herfurtner has classified all rational $j(z)$-functions
 such that  $4= \# \{ a_i \neq 2\}+\# \{ b_j \neq 3\} 
+\# \{ c_k \} .$
 Thus we always obtain at least a $1$ parameter family of Heun equations. 
Note that our $a,b,c$ notation in Table 1 refers to the
notations introduced in \S\ref{algorithm}.
If we are in the one parameter case  in Table 1 ($a=b=c$) then the relation is  $\alpha=\frac{a}{2}+\frac{1}{3},\;
 \gamma=\frac{2}{3}, \;\beta=-\frac{1}{6}+\frac{a}{2}$.

Next we list also the $2$ and $3$-parameter families of Heun equation and
 the corresponding $j(z)$-functions satisfying the hypothesis of Corollary
 \ref{8jan09}, part c,d. 
 It just an easy consequence of the Hurwitz formula that we have computed
 all $2$-and $3$-parameter families of Heun equations in Table 1b:

 Let $G$ be a Gauss hypergeometric differential equation
and $j(z)$ be a rational function such that
the pullback of $G$ with $j(z)$ of degree $N$ gives rise to a Heun equation.
Let  $a$, $b$ and $c$ denote the orders of the  local (projective) monodromy of $G$ at the singularities $0, 1$ and $\infty.$
We get the following conditions for the ramification:
Over $0$ we have $r+r_1$ points, where at the first $r$ points  have trivial
monodromy and the last $r_1$ points  have local monodromy of order dividing $a$.
This
can be written as 
 \begin{eqnarray} \label{eq1}(ax_1,\ldots, ax_r,\alpha_1,\ldots,\alpha_{r_1}),
    \end{eqnarray}
 where $r,r_1\in \NN_0, x_i \in \NN,
   a \nmid \alpha_i$.
Note that the sum over all ramification orders is $N.$
Similarly we get the corresponding ramification  over $1$ and $\infty:$
 \begin{eqnarray}   \label{eq2}    (by_1,\ldots, by_s,\beta_1,\ldots,\beta_{s_1}),&
       (cz_1,\ldots, cz_t,\gamma_1,\ldots,\gamma_{t_1}), \end{eqnarray} where $s,t,s_1,t_1 \in \NN_0, y_j,z_k \in \NN,
  b \nmid \beta_j, c \nmid \gamma_k$.
Since a Heun equation  has only 4 singularities we get that $r_1+s_1+t_1=4$.
The  Riemann-Hurwitz formula implies that
  \[ -1 \geq (-N) + \frac{1}{2}( (N-(r+r_1))+(N-(s+s_1))+(N-(t+t_1))=
              \frac{1}{2}(N-(r+s+t+4)).  \]
Hence  \begin{eqnarray}\label{eq3} N \leq r+s+t+2.\end{eqnarray}
To obtain a $2$- or $3$- parameter family we can assume $s=t=0.$ 
Thus $N\leq r+2\leq \frac{N}{2}+2$ which implies $N \leq 4.$
The following table follows from a classification of all
triples $(g_1,g_2,g_3),\; g_1g_2g_3=id,\;g_i \in S_N,$ where the
elements $g_i$ have the prescribed cycle decomposition and the
construction of the corresponding Belyi-function.\\

{\tiny
Table 1b: 2-and 3-parameter families of geometric Heun equations, $\alpha,\beta,\gamma \in\Q$
\[
\begin{array}{|c|c|c|c|c|c|c|c|}
\hline
*  &   q & t & \th_1 & \th_2 & \th_3 &
 \th_{42} & \th_{41}
  \\ \hline
\hline
I_3\; I_1\; II\; I_0^* &
   8\alpha(-3 \gamma+2) &
   -8 &
   1-\gamma &
   3(1- \gamma) &
    \gamma -2 \alpha-\frac{1}{2} &
   -2 \alpha+3 \gamma-\frac{3}{2} &
   4 \alpha 
 \\ \hline
31&
   0 &
   -1 &
   2(1- \gamma) &
   -4 \alpha+2 \gamma-1 &
   2(1- \gamma) &
   2 \gamma-1 &
   4 \alpha 
 \\ \hline
32 &
   0 &
   -1 &
   -\alpha-\beta+\gamma &
   2(1- \gamma) &
   -\alpha-\beta+\gamma &
   2 \beta &
   2 \alpha 
 \\ \hline
33&
   -3 \alpha \beta t_1 &
   {\frac{t_1^2}{3},\  t_1^2+3t_1+3=0}  & 
 \frac{2}{3}-(\alpha+\beta)  &
 \frac{2}{3}-(\alpha+\beta)  &
 \frac{2}{3}-(\alpha+\beta)  &
3 \beta &
3 \alpha 
 \\ \hline
34&
  0&
   -\frac{1}{3}&
   \frac{1}{2}&
   2(1-\gamma)&
   1-\gamma&
   -3\alpha+3\gamma&
   3\alpha-\frac{3}{2}
 \\ \hline 
I_2\;I_1\;III\; I_0^*&
   6\alpha (2 \beta-2 \alpha-1)&
   -8&
   -2\alpha-2 \beta +\frac{4}{3}&
   -2 \alpha+2 \beta&
   -\alpha-\beta+\frac{2}{3}&
   2 \alpha+\beta&
   3\alpha
\\ \hline
35 &
   0&
   -1 &
   1-\gamma &
   2(1-\gamma) &
   1-\gamma &
   4(\gamma- \alpha) &
   4 \alpha-2 
\\ \hline
\end{array}\]
}

\[
\begin{array}{|c|c|c|c|}
\hline
  &  j(z) & \mbox{ramification data} & (\alpha,\beta,\gamma) 
  \\ \hline
\hline
I_3\; I_1\; II\; I_0^* & \frac{(z^4+8z^3)}{(64 z-64)}&  (3)(1), \; (2)(2),\; (3)(1) & \beta=\gamma-\alpha-\frac{1}{2} \\
31 &\frac{(-z^4+2 z^2-1)}{(4 z^2)}&  (2)(2), \; (2)(2),\; (2)(2) & \beta=\gamma-\alpha-\frac{1}{2} \\
32 & z^2 & (2),\; (1)(1), \; (2) & \\
33 & (z+1)^3 & (3),\; (1)(1)(1), \; (3) & \gamma=\frac{2}{3} \\
34&\frac{1}{4} z^2 (z+3)&  (2)(1),\; (2)(1),\;(3) & \beta=\gamma-\alpha-\frac{1}{2} \\ 
I_2\;I_1\;III\; I_0^* & -\frac{1}{27} \frac{(z-4)^3}{z^2} & (3),\;(2)(1),\; (2)(1) & \gamma=\frac{2}{3} \\
35 & -4z^2(z-1)(z+1) & (2)(1)(1),\; (2)(2),\; (4) & \beta=\gamma-\alpha-\frac{1}{2} \\ \hline
\end{array}
\]
Note that all the $2$- and $3$-parameter Heun equations in Table 1 appear in Table 1b while 
the 2-parameter Heun equations in Table 1b, $I_3\; I_1\; II\; I_0^*$ and $33$ extend Table 1. 
(We use the notation $I_3\; I_1\; II\; I_0^*$ and $I_2\;I_1\;III\; I_0^*$
 to indicate that the $j$-function we use here is the
 same as in Herfurtner's list up to a M\"obius-transformation. In Herfurtner's
 list the corresponding differential equations are Gauss hypergeometric ones.)
\end{rem}

\section{Lam\'e equations}
\label{lame}
The most studied Heun equations are the so called Lam\'e equations: 
\[ p(z) \frac{d^2y}{dz^2}+\frac{1}{2} p'(z) \frac{dy}{dz}-(n(n+1)z+B)y=0, \]
where $p(z)=4z^3-g_2z-g_3$.
Hence we also  list the cases, where  the Heun equations in Table~1 specialize
(after M\"obius transformations) to Lam\'e equations.
 The relation between the above standard notation for a Lam\'e equation
and our notation is given by the transformation $z \mapsto z+\frac{t+1}{3},$
that maps our singularities at $t, 0, 1$ to $t-\frac{t+1}{3}, -\frac{t+1}{3},
1-\frac{t+1}{3}$
and we have
$$
n=\th_4-\frac{1}{2}, \quad B=4q
$$

{\tiny
\begin{center}
{Table 2: Lam\'e equations coming from geometry
\begin{tabular}{|c|c|c|c|c|c|c|c|}
\hline
  &           &       &     &                 &            &            &      \\ 
* &conditions &   $q$ & $t$ & $\th_i,$ & $\th_4$  & $\th_{42}$ & $\th_{41}$  \\ 
  &           &       &     &    $i=1,2,3$               &            &            &      \\      \hline
\hline
1 &$a=1$ &
$ \frac{5}{3} t_1 $ &
$\frac{t_1^2}{3}, \ t_1^2+3t_1+3=0 $ &
$  \frac{1}{2}  $ &
$  \frac{9}{2}  $ &
$  \frac{5}{2}  $ &
$  -2  $
\\ \hline
5 &$a+c=\frac{3}{2},a+b=\frac{5}{4}$&
$  0$ &
$ -1 $ &
$  \frac{1}{2}  $ &
$  \frac{7}{2}-4a  $ &
$  2-2a  $ &
$  2a-\frac{3}{2}  $
\\ \hline
6 &$a=\frac{2}{3}$&
$ 0  $ &
$\frac{t_1^2}{3}, t_1^2+3t_1+3=0$ &
$  \frac{1}{2}  $ &
$  \frac{1}{2}  $ &
$  \frac{1}{2}  $ &
$  0  $
\\ \hline
11 &$a=1$ &
$  \frac{37}{128}  t_1 $ &
$\frac{t_1^2}{8}, 4t_1^2+13t_1+32=0$ &
$  \frac{1}{2}  $ &
$  \frac{7}{2}  $ &
$  2  $ &
$  -\frac{3}{2}  $
\\ \hline
12 &$b=\frac{3}{4},a=\frac{5}{12}$ & 
$ \frac{1}{32} $ &
$ \frac{3}{4} $ &
$  \frac{1}{2}  $ &
$  \frac{1}{4}  $ &
$ \frac{3}{8} $ &
$  \frac{1}{8}  $
\\ \hline
14 &$b=\frac{2}{3},a=\frac{5}{6}$ &
$  \frac{1}{8}  $ &
$ -3 $ &
$ \frac{1}{2} $ &
$  1  $ &
$  \frac{3}{4}  $ &
$  -\frac{1}{4}  $
\\
&$b=\frac{2}{3},a=\frac{7}{12}$ &
$  \frac{3}{64}  $ &
$ \frac{3}{4} $ &
$ \frac{1}{2} $ &
$  \frac{1}{4}  $ &
$  \frac{3}{8}  $ &
$  \frac{1}{8}  $
\\ \hline
26 &$a=1$&
$  -\frac{29}{125}  t_1  $ &
$\frac{4t_1^2}{125},\ t_1^2-11t_1+\frac{125}{4}=0$ &
$ \frac{1}{2} $ &
$  \frac{5}{2}  $ &
$  \frac{3}{2}  $ &
$  -1  $
\\
 &$a=\frac{3}{5}$&
$  \frac{1}{4t_1}   $ &
$\frac{125}{4t_1^2},\ t_1^2-11t_1+\frac{125}{4}=0$ &
$ \frac{1}{2} $ &
$  \frac{1}{10}  $ &
$  \frac{3}{10}  $ &
$  \frac{2}{10}  $
\\ \hline
27 &$b=\frac{3}{4}$& 
$  0  $ &
$ -1 $ &
$ \frac{1}{2} $ &
$  2a-\frac{1}{2}  $ &
$  a  $ &
$  -a+\frac{1}{2}  $
\\ \hline
28 & $a=\frac{2}{3}$ &
$  0  $ &
$-\frac{t_1^2}{3},\ t_1^2-6t_1-3=0$ &
$  \frac{1}{2}  $ &
$ \frac{1}{2} $ &
$  \frac{1}{2}  $ &
$  0  $
\\ \hline

32& $a=\frac{3}{4}$ & 
$  0  $ &
$ -1 $ &
$ \frac{1}{2} $ &
$ \frac{2}{3}  $ &
$  \frac{7}{12}  $ &
$  -\frac{1}{12}  $
\\ \hline
33 &&
$\frac{1}{12}(3a-1)(3a-2)t_1 $&
$\frac{t_1^2}{3},\ t_1^2+3t_1+3=0$ &
$ \frac{1}{2} $ &
$ 3a-\frac{3}{2} $ &
$ \frac{3}{2}a-\frac{1}{2} $ &
$ 1-\frac{3}{2}a $
\\ \hline
36& $a=\frac{5}{8}$&
$  \frac{1}{216}  t_1 $ &
$\frac{t_1^2}{27},\ t_1^2-10t_1+27=0$ &
$ \frac{1}{2} $ &
$ \frac{1}{3}  $ &
$  \frac{5}{12}  $ &
$  \frac{1}{12}  $
\\ \hline
\end{tabular}}
\end{center}
}

\section{Comparison with known results}
\label{comparison}
 The Heun equations computed  by Maier, s. \cite[Thm. 3.8] {Maier05}, via polynomial pull-backs of hypergeometric differential equations appear with the exception
 of  the equations $(3.6.a)$ in our list:
 \[ \begin{array}{c|c}
      \mbox{Nr. in Maier's list} & \mbox{ Nr. in List~1} \\
   \hline
          (3.5a)    & 5 \\
          (3.5b)    & 12\\
          (3.5c)    & 2 \\
          (3.6b)    & 36\\
          (3.6c)&  37\\
          (3.6d) & 38
    \end{array} \]
 The Heun equations  $(3.6.a),$
 \[ y''+ \frac{2-(\alpha +\beta)}{3}(\frac{1}{z+\zeta_3}+\frac{1}{z}+\frac{1}{z-1})y'+\frac{\alpha\beta z- \frac{\alpha\beta}{3} (1-\zeta_3)}{z(z-1)(z+\zeta_3)}y=0, \quad \zeta_3^3=1
 \]
 which depend on two free parameters $\alpha$ and $\beta$,
 appears in Table 1b), row 33.
 If the monodromy group is contained in $\SL_2(\ZZ)$ (for instance for the parameters
 $a, b, c $ in the Table~1 equal to $\frac{1}{2}$)
 then some of these differential equations appear also in literature, e.g.
in the study of the Grothendieck $p$-curvature conjecture
\cite{Beukers02} and \cite{Chud1383}.

 We list these examples, where we have used  M\"obius transformations to obtain
 coefficients in $\QQ[z].$
\[ \begin{array}{cllll}
  \mbox{Table 1} &\mbox{Notation } &  & p(z)\\
                       & \mbox{in \cite{her91}} &&\\
  1 &I_1 I_1 I_1 I_9 & p(z)y''+p'(z)y'+(z+\frac{1}{3})y=0 & z(z^2+z+\frac{1}{3}) &\\
  2 &I_1 I_1 I_2 I_8 &  p(z)y''+p'(z)y'+zy=0 &  z(z-1)(z+1) \\
  3 &I_1 I_2 I_3 I_6&  p(z)y''+p'(z)y'+(z-\frac{1}{4})y=0& z(z-1)(z+\frac{1}{8})\\
  4& I_1 I_1 I_5 I_5& p(z)y''+p'(z)y'+(z+3)y=0  &z(z^2+11z-1) \\
 \end{array}\]
Note that these examples also arise from Beauville's list, see
\cite{Beauville}.

\begin{rem}\rm
At the time this paper was accepted 
further research was meanwhile carried out by other authors. 
All hypergeometric to Heun transformations with two or three continuous
parameters, cf. Remark~\ref{23param}, up to M\"obius transformations were independently studied by Filipuk and Vidunas
in \cite{FV09}.
Also there is a 
{ \it Table of all hyperbolic 4-to-3 rational Belyi maps and their dessins}
available by van Hoeij and Vidunas \cite{HV2011}, were all Belyi maps satisfying
the conditions formulated in Corollaries 1 and 2 are listed together
with further interesting properties.
We also would like to point out that Sijsling has classified as a part of his
PhD thesis all Lam\'e equations with arithmetic monodromy group of type
$(1,e)$ that are pullbacks of hypergeometric differential equations,
see \cite{Si2012}.
\end{rem}


{}
\end{document}